\theoremstyle{plain}
\newtheorem{theorem}{\bf Theorem}[section]
\newtheorem*{theorem*}{Theorem}
\newtheorem{proposition}[theorem]{\bf Proposition}
\newtheorem{lemma}[theorem]{\bf Lemma}
\newtheorem{corollary}[theorem]{\bf Corollary}
\newtheorem*{conjecture*}{\bf Conjecture}
\theoremstyle{definition}
\newtheorem{definition}[theorem]{\bf Def\mbox{}inition}
\newtheorem{remark}[theorem]{\bf Remark}
\newtheorem{algorithm}[theorem]{\bf Algorithm}
\theoremstyle{remark}
\theoremstyle{example}
\def \ld{{\operatorname{ld}}}
\def \lin{{\operatorname{lin}}}
\def\ini{\operatorname{\rm in}}
\def \reg{{\operatorname{reg}}}
\def \creg{{\operatorname{creg}}}
\def \ZZ{\mathbb Z}
\newcommand{\lr}[1]{\langle #1 \rangle}
\begin{document}
\title{Componentwise regularity (I)}
\author[G. Caviglia]{Giulio Caviglia}  \thanks{The work of the first author was supported by a grant from the
Simons Foundation (209661 to G.~C.). }
\address{Department of Mathematics, Purdue University 150 N. University Street, West Lafayette, IN 47907-2067}
\email{gcavigli@math.purdue.edu}
\author[M. Varbaro]{Matteo Varbaro}
\address{Dipartimento di Matematica,
Universit\`a degli Studi di Genova, Via Dodecaneso 35, 16146, Italy}
\email{varbaro@dima.unige.it}
\subjclass[2010]{13A02, 13B25, 13D02, 13P10, 13P20}
\keywords{Castelnuovo-Mumford regularity, componentwise linearity, Buchberger algorithm, weight orders}
\maketitle

\begin{abstract} We define the notion of componentwise regularity and study some of its basic properties. We prove an analogue, when working with weight orders, of  Buchberger's criterion  to compute Gr\"obner bases; the proof of our criterion relies on a strengthening of a lifting lemma of Buchsbaum and Eisenbud. This criterion helps us to show a stronger version of Green's crystallization theorem in a quite general setting, according to the componentwise regularity of the initial object. Finally we show a necessary condition, given a submodule $M$ of a free one over the polynomial ring and a weight such that $\ini(M)$ is componentwise linear, for the existence of an $i$ such that $\beta_i(M)=\beta_i(\ini(M))$. 
\end{abstract}
\maketitle
\section{Introduction}
The notion of componentwise linearity was introduced in \cite{HH}, and since then componentwise linear modules have demonstrated to be quite ubiquitous objects \cite{HRW}, \cite{AHH}, \cite{R}, \cite{C}, 
 \cite{CHH}, \cite{IR}, \cite{SV},  \cite{RS},   \cite{NR}, \cite{CS}.
 Despite the fact that componentwise linear modules can be defined over any standard graded ring, the definition works better in a Koszul ambient space. 
 
 Let $R$ be a standard graded $K$-algebra,  and denote with $\mathbf m$ its homogeneous maximal ideal.  
 Given a finitely generated graded $R$-module $M$, we write $\reg_R(M)$ for its Castelnuovo-Mumford regularity and 
 when there is no ambiguity we denote $\reg_R(M)$ simply by $\reg(M).$

\begin{definition}
Let $M=\bigoplus_{d\in \mathbb Z} M_d$ be a finitely generated $\mathbb Z$-graded $R$-module and, for every 
$a\in \ZZ$, denote with $M_{\langle a \rangle}= \bigoplus_{d\geq 0} \mathbf m^d M_a$ the submodule of $M$ generated by $M_a.$ We say that $M$ is \emph{$r$-componentwise regular} if $\reg (M_{\lr a})\leq a+r$ for all $a.$ The \emph{componentwise regularity of  $M$} is the infimum of all $r$ such that $M$ is $r$-componentwise regular; we will denoted it by $\creg(M).$
\end{definition} 

Clearly a graded module $M\not =0$ is componentwise linear if and only if its componentwise regularity equals zero.
When $R$ is Kozsul, i.e. the residue field $K=R/\mathbf m$ has a linear graded free resolution over $R,$
for every $d\geq 0$, the ideal $\mathbf m^d$ has regularity equal to $d;$ in particular $R$, and more generally every direct sum of shifted copies  of $R$,  has componentwise regularity zero. 
When $R$ is Koszul, one sees immediately that  if a module $M$ is generated in a single degree $d$, then it has Castelnuovo-Mumford  regularity $d+r$ if and only if  $\creg(M)=r.$ As we will explain in Section \ref{sec-creg}, eventually, one can see that $R$ is Koszul if and only if every finitely generate graded $R$-module $M$ has finite componentwise regularity, despite the fact that $\creg(M)$ can be much bigger (as well as much smaller) than $\reg(M)$.

The main reason leading us to introduce the concept of componentwise regularity in the present paper, is the feeling that it may provide a better approximation for the complexity of computing Gr\"obner bases than the Castelnuovo-Mumford regularity. In some sense this belief is expressed by the generalization of Green's Crystallization Principle we prove in Theorem \ref{Cr}. We decided to show such a result in the general setting of initial modules with respect to weights. To this goal, we propose a version of the Buchberger algorithm for weight orders (see Algorithm \ref{alg}), based on a strengthening of a lifting lemma proved by Buchsbaum and Eisenbud in \cite{BE} (see Lemma \ref{LIFT}). The technical efforts to get such a generalization are more serious than one might, at first blush, expect: while the computation of s-pairs is substituted by a syzygetic argument, the major difficulty when working with weight orders is the lack of a division algorithm.

Finally we show, in Theorem \ref{leila} that, under the assumption of componentwise linearity of the initial object, the equality between the $i$th Betti numbers of the module and its Gr\"obner deformation forces the componentwise linearity of the $j$th syzygy module, for all $j\geq i-2.$ For the proof, even when the module resolved is just an ideal, we need all the general theory we previously developed for modules and weights. 
\vspace{.3cm}\\
The first author would like to thank L. Sharifan for suggesting the statement of Theorem \ref{leila}, and M. Kummini for many discussions regarding Section 3 and especially for the reference to the work in \cite{BE}.


\section{Initial modules}

In this section we recall some basic facts about initial modules with respect to weight orders. For more detailed proofs we refer the reader to \cite[Ch. 8.3]{MS} and \cite[Ch. 15] {E}. We will adopt the same notation as in \cite{MS}.\\

Let $A=K[X_1,\dots,X_n]$ be a polynomial ring, let $\omega=(w_1,\dots,w_n)\in \mathbf Z^n_{\geq 0}$ be a non-negative integral weight, and let $F=\bigoplus_{j=1}^m A(-d_j)= \bigoplus_{j=1}^m A e_j$ a free graded $A$-module with basis $e_1,\dots,e_m$ of degrees $d_1,\dots,d_m.$ We let $\epsilon=(\epsilon_1,\dots, \epsilon_m)$ be a vector consisting of non-negative integers so that for every $j$, $\epsilon_j$ is the weight of $e_j.$ The weight of a monomial $\mathbf X ^{\mathbf u}e_j\in F$ is set to be $\omega \cdot \mathbf u + \epsilon_j.$\\

Given a non-zero element $f\in F$  we define its initial part with respect to $(\omega,\epsilon)$ in an obvious way: we first write $f$ uniquely as a sum with non-zero coefficients of monomials in $F$ and then we define $\ini_{(\omega,\epsilon)}(f)$ to be the sum with coefficients of all monomials with largest weight. 
Given a submodule $M$ of $F$, its initial module  $\ini_{(\omega,\epsilon)}(M)$  is just the submodule of $F$ generated by all $\ini_{(\omega,\epsilon)}(f)$ for $f\in M.$
When $M$ is graded, the modules $\ini_{(\omega,\epsilon)}(M)$ and $M$ have the same Hilbert function.\\

A standard method to work with initial modules is to use $(\omega, \epsilon)$ to construct a flat family in the following way. Let  $\tilde A$ be the  $\mathbb Z \times \mathbb Z$-graded polynomial ring $A[t]$ with $\deg(X_i)=(1,w_i)$ for all $i$, and  $\deg(t)=(0,1).$ Similarly, given $F= \bigoplus_{j=1}^m A e_j$, we define $\tilde F$ to be $\bigoplus_{j=1}^m \tilde A e_j$ with degree of $e_j$ equal to $(d_j, \epsilon_j).$

Given an element $f= \sum c_{\mathbf u,j} \mathbf X ^{\mathbf u} e_j \in F$ we let its homogenization in $\tilde F$ to be \[\tilde f= t^d \sum c_{\mathbf u,j} ( t^{-\omega \cdot \mathbf u} \mathbf X ^{\mathbf u}) (t^{-\epsilon_j} e_j),\] where $d$ is the largest weight of a monomial in the support of $f.$

Given a submodule $M$ of $F$ we define $\tilde M$ be the  submodule generated by all $\tilde f$, for $f\in M$. For an element $\alpha \in K$ we let $\tilde M_{t=\alpha}$ to be the image in $F$  of the module $\tilde M$ under the map evaluating $t$ at $\alpha,$ i.e. $\tilde M_{t=\alpha} \cong  \tilde M \otimes \tilde A/( t-\alpha).$ 

We have the following standard collection of facts (see \cite[Prop. 8.26, Prop. 8.28]{MS}).
\begin{proposition} \label{carrellata} Let $M\subseteq F$ be a graded submodule. Then 
\begin{enumerate}
\item $\tilde F /\tilde M$ is $K[t]$-free.
\item $\tilde M_{t=1}=M$ and $\tilde M_{t=0}= \ini_{(\omega, \epsilon)}(M),$ so $M$ and $\ini_{(\omega, \epsilon)}(M)$  have the same Hilbert function.

\item Given $\tilde{\mathbb F},$ 
a minimal graded $\tilde A$-free resolution of $\tilde F/ \tilde M$, one has that $\tilde{\mathbb F}\otimes \tilde A/(t-1)$ and $\tilde{\mathbb F}\otimes \tilde A/(t)$ are graded $A$-free resolutions of $F/M$ and $F/\ini_{(\omega, \epsilon)}(M).$ Furthermore $\tilde{\mathbb F}\otimes \tilde A/(t)$ is minimal, hence $\beta_{ij}(F/M)\leq \beta_{ij}(F/ \ini_{(\omega, \epsilon)}(M))$ for all $i,j.$
\end{enumerate}
\end{proposition}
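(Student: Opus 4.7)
My plan is to establish (2) first by direct unwinding of the homogenization formula, then the crucial flatness statement (1), and finally to derive (3) as a formal consequence of (1).

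Part (2) follows immediately from inspection of the explicit formula
\[
\tilde f \;=\; t^d \sum_{\mathbf u, j} c_{\mathbf u, j}\, t^{-\omega\cdot\mathbf u - \epsilon_j}\, \mathbf X^{\mathbf u}\, e_j.
\]
Setting $t=1$ returns $f$, while setting $t=0$ kills every term with $d - \omega\cdot\mathbf u - \epsilon_j > 0$, leaving precisely the monomials of maximal weight, whose sum is $\ini_{(\omega,\epsilon)}(f)$. Extending $\tilde A$-linearly from generators to all of $\tilde M$ gives $\tilde M|_{t=1}=M$ and $\tilde M|_{t=0}=\ini_{(\omega,\epsilon)}(M)$; equality of Hilbert functions is then a consequence of (1).

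The heart of the proof is (1): that $\tilde F/\tilde M$ is free as a $K[t]$-module. Viewing $\tilde F/\tilde M$ as a $\mathbb Z$-graded $K[t]$-module through the second (weight) grading on $\tilde A$ — in which $t$ has positive degree — graded freeness is equivalent to graded torsion-freeness over the graded PID $K[t]$, and the latter reduces to $t$ being a nonzerodivisor on $\tilde F/\tilde M$. So one must show: if $\tilde g \in \tilde F$ is bi-homogeneous and $t\tilde g \in \tilde M$, then $\tilde g \in \tilde M$. This is where the construction pays off: the generators $\tilde f$ of $\tilde M$ are, by design, not divisible by $t$ (some term of each $\tilde f$ has $t$-exponent $0$), so a factor of $t$ can be cancelled from any representation of $t\tilde g$. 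The cleanest way to make this precise is to localize at $t$ and use the automorphism $X_i \mapsto t^{-w_i} X_i$ of $\tilde A[t^{-1}]\cong A[t,t^{-1}]$, under which $\tilde M[t^{-1}]$ corresponds to $M\otimes_K K[t,t^{-1}]$; then $(\tilde F/\tilde M)[t^{-1}]\cong (F/M)\otimes_K K[t,t^{-1}]$ is $K[t,t^{-1}]$-free, forcing the $t$-torsion freeness. I expect this cancellation/localization step to be the main technical obstacle.

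Once (1) is in hand, (3) is routine. Freeness over $K[t]$ gives $\Tor_i^{K[t]}(\tilde F/\tilde M,\,K[t]/(t-\alpha))=0$ for all $i\geq 1$ and all $\alpha\in K$, so tensoring the minimal graded $\tilde A$-free resolution $\tilde{\mathbb F}$ with $\tilde A/(t-\alpha)$ preserves exactness and produces a graded $A$-free resolution of the corresponding fiber. Minimality at $t=0$ is inherited from $\tilde{\mathbb F}$: the differentials of the latter have entries in the graded maximal ideal $(X_1,\dots,X_n,t)$ of $\tilde A$, and these remain in $(X_1,\dots,X_n)$ after killing $t$. Thus $\tilde{\mathbb F}\otimes \tilde A/(t)$ is the minimal graded $A$-free resolution of $F/\ini_{(\omega,\epsilon)}(M)$, whereas $\tilde{\mathbb F}\otimes \tilde A/(t-1)$ resolves $F/M$ only possibly non-minimally, yielding the graded Betti number inequality.
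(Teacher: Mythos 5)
Your treatment of (2) and (3) is essentially fine (for (3) you should really compute Tor over $\tilde A$, or just tensor the resolution with $0\to\tilde A\xrightarrow{t-\alpha}\tilde A\to\tilde A/(t-\alpha)\to 0$ and use that $t-\alpha$ is a nonzerodivisor on $\tilde F/\tilde M$; but that is the standard argument and the minimality claim at $t=0$ is correct). The paper itself does not reprove these facts, it cites \cite[Prop.~8.26, 8.28]{MS}, and your outline for (2)--(3) matches that standard treatment.

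The genuine gap is in (1), which is the heart of the statement. Your proposed proof of $t$-torsion-freeness is: localize at $t$, identify $(\tilde F/\tilde M)[t^{-1}]$ with $(F/M)\otimes_K K[t,t^{-1}]$, and conclude that freeness of the localization ``forces'' $t$-torsion-freeness. This is a non sequitur: the $t$-power torsion of $\tilde F/\tilde M$ is exactly the kernel of the localization map $\tilde F/\tilde M\to(\tilde F/\tilde M)[t^{-1}]$, so no property of the localized module can detect it. Likewise your preliminary heuristic (``the generators of $\tilde M$ are not divisible by $t$, so a factor of $t$ can be cancelled from any representation'') is false for general submodules generated by $t$-indivisible elements: take $n=2$, $\omega=(1,0)$, $M=\lr{x+y,\,x}=(x,y)\subseteq A$, and the submodule $N=\lr{\widetilde{x+y},\tilde x}=(x+ty,\,x)=(x,ty)\subseteq\tilde A$. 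Its generators are not divisible by $t$, its localization at $t$ is $(x,y)\tilde A[t^{-1}]$, so $(\tilde A/N)[t^{-1}]\cong K[t,t^{-1}]$ is free, and yet $\tilde A/N$ has $t$-torsion (the class of $y$). So both of your stated reasons would ``prove'' $K[t]$-freeness for $\tilde A/N$, which is false. What actually makes (1) work is that $\tilde M$ is generated by the homogenizations of \emph{all} elements of $M$, equivalently that $\tilde M$ is $t$-saturated: if $h\in\tilde F$ is bihomogeneous with $th\in\tilde M$, then $h_{t=1}\in\tilde M_{t=1}=M$, and writing $h=t^k h'$ with $t\nmid h'$ one checks from bihomogeneity that $h'$ is precisely the homogenization of $h_{t=1}$, hence $h'\in\tilde M$ and so $h\in\tilde M$. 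This saturation property is exactly the issue isolated in the paper's Proposition \ref{SAT} ($\tilde M=\lr{\tilde f_1,\dots,\tilde f_r}:t^\infty$), and indeed the whole point of Algorithm \ref{alg} is that the naive homogenization of a generating set generally fails to have a $K[t]$-free quotient; your argument, as written, would make that failure impossible. You need to replace the localization step by the saturation argument above (or cite \cite{MS} as the paper does).
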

\begin{remark}\label{INSIZ}
Consider, as in the above proposition, the resolutions $\tilde{\mathbb F}$, $\mathbb F:=\tilde{\mathbb F}\otimes \tilde A/(t-1)$ and  $\mathbb G:=\tilde{\mathbb F}\otimes \tilde A/(t)$ of $\tilde F/ \tilde M,$  $F/M$ and $F/\ini_{(\omega, \epsilon)}(M)$ respectively. Write $\tilde F_i=\bigoplus_{j=1}^{b_i}  \tilde A e_{ij}$, where $e_{ij}$ has bi-degree, say $(a_{ij},\epsilon_{ij}).$ Let $\epsilon(i)$ be the weight $(\epsilon_{i1}, \dots,\epsilon_{ib_i})$ and denote with $\tilde B$, $\tilde B _{t=1}$ and $\tilde B_{t=0}$ the images in $\tilde F_i,$  $F_i,$ and $G_i$ of the respective differential maps. Notice that, up to a graded isomorphism, we can assume that $F_i$ and $G_i$ are equal. \vspace{.3cm}\\
We claim that $\ini_{\omega,\epsilon(i)}(\tilde B_{t=1})= \tilde B_{t=0}.$ 
\vspace{.2cm}\\
First note that, by restricting $\mathbb F$ and $\mathbb G$ to all the homological degrees strictly greater than $i$, we obtain two graded resolutions with the same free modules. Thus  $\tilde B_{t=1}$ and $\tilde B_{t=0}$ have the same Hilbert function.
It is also a standard observation that for every $\alpha\in K$ the complex $\tilde{\mathbb F}\otimes \tilde A/(t-\alpha)$ is a graded $A$-free resolution of $\tilde F/\tilde M \otimes \tilde A/(t-\alpha),$ so by defining  
 $\tilde B_{t=\alpha}$ in an obvious way, we deduce that the the Hilbert function of $\tilde B _{t=\alpha}$ does not depend on $\alpha$ (this is true even when working with a field extension of $K$), and thus for every $i,$ the module $\tilde F_i/\tilde B$ is $K[t]$-free.
We deduce that both $t$ and $t-1$ do not divide the homogeneous minimal generators of $\tilde B$ hence, 
by setting $\tilde N \subseteq \tilde F_i$ to be the homogenization of $\tilde B_{{t=1}}$ with respect to $(\omega, \epsilon(i))$, we notice that  $\tilde N_{t=0}\subseteq \tilde B_{t=0}.$ The conclusion now follows because the left  hand side is precisely $\ini_{\omega,\epsilon(i)}(\tilde B_{t=1})$ and the equality between Hilbert functions forces the equality between the two modules.

\end{remark}
\section{A lifting Lemma} 
In this section we discuss a lifting lemma for  modules which is a strengthening of an analogous result of Buchsbaum and Eisenbud in \cite{BE}. Let $S$ be a commutative ring and let $l$ be a regular element of $S$. Set $R$ to be $S/(l).$ 
\begin{definition}
Given an $R$-module $M$ and an $S$-module $N$, we say that $(S,N)$ is a \emph{lifting} of $(R,M)$ (or simply that $N$ is a lifting of $M$) if $l$ is regular on $N$ and $N/lN\cong M.$
\end{definition}

The following is an adaptation of Lemma 2, Section 3 of \cite{BE}. It has a weaker assumption then the lemma in \cite{BE} (see the discussion following the statement) and moreover it includes not only a sufficient but also a necessary condition.

\begin{lemma}[Lifting]\label{LIFT} Let $R$ and $S$ as above. Assume that there exists the following commutative diagram of $S$-modules, with exact columns and exact bottom row (the two top rows may not be complexes):\\

\[
\begin{CD}
0                @.            0                @.           0             \\ 
@VVV                           @VVV                          @VVV          \\ 
G_2              @>g_2>>       G_1              @>g_1>>      G_0           \\
@VV \cdot lV                   @VV \cdot l V                 @VV\cdot lV   \\ 
G_2              @>g_2>>       G_1              @>g_1>>      G_0           \\
@VVp_2V                        @VVp_1V                       @VVp_0V       \\ 
F_2              @>f_2>>       F_1              @>f_1>>      F_0           \\
@VVV                           @VVV                          @VVV          \\ 
0                @.            0                @.           0           .             
\end{CD}
\]
Then ${\rm coker}(g_1)$ is a lifting (via $l$) of ${\rm coker}(f_1)$ if and only if 
$g_1 \circ g_2(G_2)\subseteq l \cdot g_1(G_1)$.\\
\end{lemma}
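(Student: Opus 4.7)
The first step is to show that the isomorphism $\operatorname{coker}(g_1)/l\operatorname{coker}(g_1)\cong \operatorname{coker}(f_1)$ is automatic from the diagram. Write $M:=\operatorname{coker}(f_1)$ and $N:=\operatorname{coker}(g_1)$. Exactness of the left column gives $\ker(p_0)=lG_0$, and commutativity together with the surjectivity of $p_1$ gives $p_0(g_1(G_1))=f_1(p_1(G_1))=f_1(F_1)$. So $p_0$ induces an isomorphism
\[ N/lN \;=\; G_0/\bigl(lG_0+g_1(G_1)\bigr) \;\xrightarrow{\ \sim\ }\; F_0/f_1(F_1) \;=\; M. \]
Hence the entire content of the lemma is to identify when multiplication by $l$ is injective on $N$. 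A further free observation: $g_1g_2(G_2)\subseteq lG_0$, since commutativity and $f_1f_2=0$ (exactness of the bottom row) give $p_0g_1g_2=f_1f_2p_2=0$, so $g_1g_2(G_2)\subseteq \ker(p_0)=lG_0$. Because $l$ is regular on $G_0$, for each $z\in G_2$ there is a unique $\delta(z)\in G_0$ with $g_1g_2(z)=l\cdot\delta(z)$, and the hypothesis is exactly that $\delta(z)\in g_1(G_1)$.

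The necessary direction is then immediate: if $l$ is regular on $N$, then $l\delta(z)\in g_1(G_1)$ forces the class of $\delta(z)$ in $N$ to vanish, so $\delta(z)\in g_1(G_1)$. For sufficiency I plan a standard diagram chase. Suppose $lx=g_1(y)$ with $x\in G_0$, $y\in G_1$. Applying $p_0$ yields $f_1(p_1(y))=0$, so exactness of the bottom row produces $z\in F_2$ with $p_1(y)=f_2(z)$; lifting $z$ through the surjection $p_2$ to $\tilde z\in G_2$ and using commutativity, $p_1(y-g_2(\tilde z))=0$, i.e.\ $y=g_2(\tilde z)+ly'$ for some $y'\in G_1$. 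Then $lx=g_1g_2(\tilde z)+lg_1(y')$, and the hypothesis rewrites $g_1g_2(\tilde z)=lg_1(w)$; cancelling $l$ (regular on $G_0$) yields $x=g_1(w+y')\in g_1(G_1)$, showing $l$ is regular on $N$.

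The main obstacle is a bookkeeping one: the top two rows are not assumed to be complexes, so the chase cannot invoke any relation of the form $g_1\circ g_2=0$ at any step; every cancellation must rest solely on the exactness of the bottom row, the surjectivity of each $p_i$, the injectivity of $l$ on each $G_i$, and the single inclusion $g_1g_2(G_2)\subseteq l\cdot g_1(G_1)$. The delicate moment is precisely when one reaches $lx=g_1g_2(\tilde z)+lg_1(y')$: without the stated hypothesis there is no mechanism to turn the first summand back into $l$ times an element of $g_1(G_1)$, and this asymmetry between the two summands is exactly what makes the condition both necessary and sufficient.
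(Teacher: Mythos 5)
Your proof is correct and follows essentially the same diagram chase as the paper: necessity by cancelling $l$ against regularity on $\operatorname{coker}(g_1)$, and sufficiency by lifting through $p_2$, using exactness of the bottom row and of the columns, and invoking the hypothesis exactly once to absorb $g_1g_2(\tilde z)$ into $l\cdot g_1(G_1)$. Your explicit preliminary check that $\operatorname{coker}(g_1)/l\operatorname{coker}(g_1)\cong\operatorname{coker}(f_1)$ is automatic is a nice touch that the paper leaves implicit ("it is enough to show that $l$ is regular"), but it does not change the argument.
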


\begin{remark}
Notice that when the two top rows of the above diagram are complexes the condition 
$g_1 \circ g_2(G_2)\subseteq l \cdot g_1(G_1)$ is trivially satisfied. Lemma 2 of \cite{BE} is precisely the left  implication of Lemma \ref{LIFT} under this extra assumption.
\end{remark}
\begin{proof}
Assume that  ${\rm coker}(g_1)$ is a lifting  of ${\rm coker}(f_1);$ hence $G_0/(g_1(G_1)+lG_0)\cong  F_0/f_1(F_1),$ where the isomorphism is the one induced by the projection $p_0.$ Let $a_2$ be an element of $G_2.$ From the commutativity of the diagram we have that  $p_0(g_1(g_2(a_2)))=0,$ thus  $g_1(g_2(a_2))=l b_0$ with $b_0\in G_0.$ Since $l$ is a  regular on $G_0/g_1(G_1),$ we deduce that $b_0\in  g_1(G_1).$

Assume now that  $g_1 \circ g_2(G_2)\subseteq l \cdot g_1(G_1).$ It is enough to show that $l$ is regular on $G_0/g_1(G_1).$  Let $a_0\in G_0$ such that $la_0=g_1(a_1)$ for some $a_1\in G_1.$ Since $p_0(la_o)=0,$ from the commutativity of the diagram we get that $f_1(p_1(a_1))=0,$ in particular $p_1(a_1)=f_2(b_2)$ for some $b_2\in F_2.$ Let $a_2$ be an element of $G_2$ with $p_2(a_2)=b_2.$ Consider $(g_2(a_2)-a_1)$ and apply $p_1.$ We get $p_1(g_2(a_2))-p_1(a_1)$ which is equal to $f_2(p_2(a_2))-f_2(b_2)$ and therefore it is zero. Hence $g_2(a_2)-a_1$ is in the kernel of $p_1.$ We can write $g_2(a_2)-a_1=l\tilde{a_1}$ for some $\tilde{a_1} \in G_1.$ By applying $g_1$ we get $lg_1(\tilde{a_1})=g_1(g_2(a_2))-g_1(a_1)=lg_1(a_1') -l a_0$ for some $a_1'\in G_1$ which exists by assumption. Since $l$ is a nonzero divisor on $G_0$ we deduce that $a_0=g_1(a_1')-g_1(\tilde{a_1})\in g_1(G_1).$ 
\end{proof}
\section{Buchberger criterion and algorithm for weight orders}

We adopt the same notation as in Section 2.
The following simple fact says that, essentially, the computation of a Gr\"obner basis corresponds to a saturation. 
\begin{proposition} \label{SAT} Let $M=\lr{f_1,\dots,f_r}\subseteq F$ then $\tilde{M}=\lr{\tilde{f_1},\dots,\tilde{f_r}}:t^{\infty}.$
\end{proposition}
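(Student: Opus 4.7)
The plan is to prove the two inclusions of $\tilde M = \lr{\tilde f_1, \ldots, \tilde f_r} : t^\infty$ separately; set $N := \lr{\tilde f_1, \ldots, \tilde f_r} \subseteq \tilde F$ for brevity. For the inclusion $\tilde M \subseteq N : t^{\infty}$, I would first isolate the following compatibility of bihomogenization with multiplication: for homogeneous $g \in A$ and homogeneous $h \in F$ (in the polynomial grading), one has
\[
\tilde g\,\tilde h \;=\; t^{e_{g,h}}\,\widetilde{gh},
\]
where $e_{g,h}\geq 0$ equals the drop in top weight from $g,h$ separately to the product $gh$. This is essentially immediate: both sides are bihomogeneous, agree at $t=1$, and can therefore differ only by a single power of $t$ whose exponent is computed from the leading weights. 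An analogous identity for sums of bihomogenizations of homogeneous elements of the same polynomial degree, with an extra factor of $t$ possibly appearing on the right if the top-weight terms cancel, handles additions.

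Next, given any homogeneous $f \in M$ and a presentation $f = \sum_i g_i f_i$ with $g_i \in A$ homogeneous of appropriate degree, I would combine the two compatibility identities to produce nonnegative integers $K$ and $b_i$ such that
\[
t^K\,\tilde f \;=\; \sum_i t^{b_i}\,\tilde g_i\,\tilde f_i \;\in\; N.
\]
Hence $\tilde f \in N : t^{\infty}$. Since $\tilde M$ is generated as a submodule of $\tilde F$ by the bihomogenizations $\tilde f$ as $f$ ranges over homogeneous elements of $M$, this delivers $\tilde M \subseteq N : t^{\infty}$.

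For the reverse inclusion, I would argue that $\tilde M$ is already $t$-saturated in $\tilde F$. Since $\tilde f_i \in \tilde M$ for every $i$, we have $N \subseteq \tilde M$, so $N : t^{\infty} \subseteq \tilde M : t^{\infty}$. The saturatedness $\tilde M : t^{\infty} = \tilde M$ is immediate from Proposition~\ref{carrellata}(1): the quotient $\tilde F / \tilde M$ is $K[t]$-free, in particular $t$-torsion-free, so $t$ is a nonzerodivisor on it.

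The main obstacle will be the first inclusion, specifically the bookkeeping of the powers of $t$ that appear whenever the top weight of $g_i f_i$ falls below the sum of the top weights of $g_i$ and $f_i$ (weight cancellation in a product), or whenever the top-weight terms cancel in the sum $\sum_i g_i f_i$ so that the top weight of $f$ drops. Once the two compatibility lemmas for products and sums are stated cleanly, the rest of the argument is formal and self-assembling.
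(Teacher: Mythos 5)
Your proposal is correct, and it differs from the paper in one of the two inclusions. For $\lr{\tilde f_1,\dots,\tilde f_r}:t^{\infty}\subseteq \tilde M$ you argue exactly as the paper does: $\lr{\tilde f_1,\dots,\tilde f_r}\subseteq\tilde M$ and $\tilde F/\tilde M$ is $K[t]$-free by Proposition \ref{carrellata}(1), hence $t$-torsion-free, so $\tilde M$ is already saturated. For the inclusion $\tilde M\subseteq \lr{\tilde f_1,\dots,\tilde f_r}:t^{\infty}$, however, the paper avoids any computation with homogenizations: it adds $(t-1)\tilde F$ to the chain $\lr{\tilde f_1,\dots,\tilde f_r}:t^{\infty}\subseteq\tilde M$, observes that all modules involved become $M+(t-1)\tilde F$, and then uses that both $\tilde F/\tilde M$ and $\tilde F/\bigl(\lr{\tilde f_1,\dots,\tilde f_r}:t^{\infty}\bigr)$ are $K[t]$-free to force the containment to be an equality (a ``free quotients plus equality modulo $t-1$ implies equality'' step). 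Your route is the classical, more elementary one: homogenize a representation $f=\sum_i g_if_i$ using the product and sum compatibilities, getting $t^K\tilde f\in\lr{\tilde f_1,\dots,\tilde f_r}$. The bookkeeping you worry about is actually lighter than you expect: since $A$ is a domain and $F$ is free, the top-weight parts of $\tilde g_i$ and $\tilde f_i$ cannot cancel in a product, so the exponent $e_{g,h}$ is always $0$ and only the sum step can introduce powers of $t$; also, none of the two identities needs $g_i$, $f_i$, or $f$ to be homogeneous in the polynomial grading, so you can drop that restriction (which is convenient, as the proposition as stated does not assume $M$ graded, although both your argument and the paper's implicitly use the graded hypothesis when invoking Proposition \ref{carrellata}(1)). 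What each approach buys: yours is self-contained on one side and uses $K[t]$-freeness only for $\tilde F/\tilde M$; the paper's is shorter on paper but needs the slightly more delicate comparison of two free quotients after evaluation at $t=1$.
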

\begin{proof} Since $\tilde{F}/\tilde{M}$ is free as $K[t]$-module and $\lr{\tilde{f_1},\dots,\tilde{f_r}}:t^{\infty}$
is the smallest submodule of $\tilde F$ containing $\lr{\tilde {f_1},\dots,\tilde{f_r}}$ such the quotient of $\tilde{F}$ by such module is $K[t]$-free, we get $\tilde{M}\supseteq \lr{\tilde{f_1},\dots,\tilde{f_r}}:t^{\infty}.$ On the other hand by adding $(t-1)\tilde F$ to both sides we get $\tilde{M} +(t-1)\tilde F \supseteq \lr{\tilde{f_1},\dots,\tilde{f_r}}:t^{\infty}+ (t-1)\tilde F \supseteq M+(t-1) \tilde F,$ where all containments are forced to be equalities. The conclusion follows because $\tilde F/ \tilde M$ and  $\tilde F / \lr{\lr{\tilde{f_1},\dots,\tilde{f_r}}:t^{\infty}}$ are $K[t]$-free. 
\end{proof}

The following Algorithm \ref{alg} is an analogue, when working with weight orders, of  the famous Buchberger's algorithm. The usual computation of s-pairs is replaced by a calculation done using  the generators of the syzygy module of  certain initial elements (which may not be monomial). The computation of standard expressions, which is not possible in the generality considered here because of the lack of a division algorithm, is avoided thanks to the lifting lemma proved in the previous section.

It is important to stress that the scope of our algorithm is more theoretical than computational. 
Our goal, within this paper,  is to derive the two corollaries which conclude the section. 
We disregard completely both the computational problem of finding generators for the syzygy modules, and also the verification of the membership of an element in a given submodule.

Given an element $q \in \tilde F$ we denote with $\underline q\in F$ its image under the map evaluating $t$ at zero.
Analogously $\underline Q$ will denote the image in $F$ of the submodule $Q$ of $\tilde F.$

\begin{algorithm}\label{alg} Let $M=\lr{f_1,\dots,f_r}\subseteq F,$ and denote $\lr{\tilde{f_1},\dots, \tilde{f_r}}$ by $M_0.$ 
We have an exact sequence derived from the free resolution of $F/\underline {M_0}$
\[A^{s}\stackrel{ \Phi}{\rightarrow} A^{r} \stackrel{(\underline{\tilde{f_1}},\dots,\underline{\tilde{f_r}})} \rightarrow F.
\] where $\Phi$ is a matrix whose entries are polynomials of $A.$
Consider
 \[
\tilde{A}^{s}\stackrel{\Phi}{\rightarrow} \tilde{A}^{r} 
\stackrel{(\tilde{f_1},\dots,\tilde{f_r})} \rightarrow \tilde F,
\] and notice that this may not even be a complex. Notice also that $(\tilde{f_1},\dots,\tilde{f_r})\circ \Phi(\tilde A^s)$ is equal to $t Q$ for some bihomegeneous submodule $Q$ of $\tilde{F}.$ Apply Lemma \ref{LIFT}, with $l=t$,  to deduce that either $t$ is regular on $\tilde{F}/M_0$ or that $Q \not \subseteq M_0,$ in this latter case we define $M_1:=M_0+Q.$
Now we replace $M_0$ by $M_1$ and repeat the same argument. 

In this way, we construct an increasing  chain $M_0\subset M_1 \subset \cdots $  of submodules of $\tilde F$ that, by the Noetherian property, must terminate, say at $M_j$. Since the chain cannot become longer, $\tilde{F}/M_j$ is therefore $K[t]$-free. Thus $\lr{\tilde f_1,\dots,\tilde f_r} :t^{\infty} \subseteq M_j.$ Each module in the chain is bihomogeneous and is mapped surjectively to $M$ under the evaluation of $t$ at $1.$ Therefore $M_j$ is contained in the homogenization $\tilde{M}$ of $M.$ Hence, by using Proposition \ref{SAT}, we get that $M_j=\tilde M.$ Thus $(M_j)_{t=0}= \ini_{(\omega,\epsilon)}(M).$ 
\end{algorithm}
From the above algorithm we derive immediately the following corollary.
\begin{corollary}[Buchberger's criterion for weight orders] \label{BU} Let $M\subseteq F$ be an $A$-module generated
by $f_1,\dots, f_r.$  Consider $A^{s}\stackrel{ \Phi}{\rightarrow} A^{r}
\rightarrow F,$ a presentation of $\lr{\ini_{(\omega,\epsilon)} {f_1},\dots, \ini_{(\omega,\epsilon)}{f_r}}.$ 
If $\frac{1}{t}(\tilde{f_1},\dots,\tilde{f_r}) \circ \Phi(\tilde A ^s) \subseteq \lr{\tilde f_1,\dots, \tilde f_r}$ then $\ini_{(\omega,\epsilon)}(M)=\lr{\ini_{(\omega,\epsilon)} {f_1},\dots, \ini_{(\omega,\epsilon)}{f_r}}.$ 
\end{corollary}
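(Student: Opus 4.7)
The plan is to recognize this corollary as a one-step specialization of Algorithm \ref{alg}, with Lemma \ref{LIFT} doing all of the work. Under the stated hypothesis the algorithm terminates after zero iterations, so $M_0 := \langle \tilde f_1,\ldots,\tilde f_r\rangle$ is already equal to the homogenization $\tilde M$, and reducing modulo $t$ delivers the conclusion.

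The first step would be to assemble the commutative diagram required by Lemma \ref{LIFT}. I would take $G_0 = \tilde F$, $G_1 = \tilde A^{r}$, $G_2 = \tilde A^{s}$, with horizontal maps $g_1 = (\tilde f_1,\ldots,\tilde f_r)$ and $g_2 = \Phi$, the matrix $\Phi$ of the corollary viewed as having entries in $\tilde A$. The bottom row would be $A^{s} \xrightarrow{\Phi} A^{r} \xrightarrow{(\ini_{(\omega,\epsilon)} f_1,\ldots,\ini_{(\omega,\epsilon)} f_r)} F$, which is exact by the assumption that $\Phi$ is a presentation of the module generated by the initial elements. The vertical arrows are multiplication by $l=t$ and reduction modulo $t$; the columns are exact because $t$ is a nonzerodivisor on every free $\tilde A$-module, and commutativity of both squares is immediate since $\Phi$ has entries in $A\subseteq \tilde A$ and the horizontal maps $g_i$ reduce mod $t$ precisely to the $f_i$.

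Next I would match the hypothesis to the condition of the Lifting Lemma. Exactness of the bottom row gives $p_0 \circ g_1 \circ g_2 = 0$, hence $g_1 \circ g_2(\tilde A^s) \subseteq t \tilde F$, so that $\tfrac{1}{t}(\tilde f_1,\ldots,\tilde f_r) \circ \Phi(\tilde A^s)$ is a well-defined submodule of $\tilde F$. The inclusion hypothesized in the corollary then reads exactly $g_1\circ g_2(G_2) \subseteq t \cdot g_1(G_1)$. Lemma \ref{LIFT} therefore ensures that $\mathrm{coker}(g_1) = \tilde F / M_0$ is a lifting of $\mathrm{coker}(f_1)$ via $t$; in particular $t$ is regular on $\tilde F / M_0$, so $M_0 : t^{\infty} = M_0$.

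To close, I would combine this with Proposition \ref{SAT}: $\tilde M = \langle \tilde f_1,\ldots,\tilde f_r\rangle : t^{\infty} = M_0 : t^{\infty} = M_0$. Setting $t=0$ and invoking Proposition \ref{carrellata}(2),
\[\ini_{(\omega,\epsilon)}(M) = \tilde M_{t=0} = (M_0)_{t=0} = \langle \ini_{(\omega,\epsilon)} f_1,\ldots,\ini_{(\omega,\epsilon)} f_r\rangle.\]
The only friction point, rather than a real obstacle, is translating the somewhat informal notation $\tfrac{1}{t}(\cdots)\circ \Phi$ into an honest submodule of $\tilde F$; this requires the preliminary observation that $g_1\circ g_2$ factors through $t\tilde F$, which in turn depends on $\Phi$ being a genuine presentation of $\langle \ini_{(\omega,\epsilon)} f_1,\ldots,\ini_{(\omega,\epsilon)} f_r\rangle$ rather than an arbitrary matrix of syzygies.
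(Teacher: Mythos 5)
Your proof is correct and is essentially the paper's own argument: the corollary is just the zero-iteration case of Algorithm \ref{alg}, where the hypothesis $Q\subseteq M_0$ lets Lemma \ref{LIFT} give $t$-regularity of $\tilde F/M_0$, and Proposition \ref{SAT} then forces $M_0=\tilde M$, so setting $t=0$ yields the claim. (The only cosmetic point is that the identity $\tilde M_{t=0}=\ini_{(\omega,\epsilon)}(M)$ is invoked without a gradedness hypothesis on $M$, but this is standard and is used in exactly the same way at the end of Algorithm \ref{alg}.)
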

The following result will play a key role in the proofs discussed in Section 5.
\begin{corollary}\label{SH} Let $M\subseteq F$ be a graded $A$-module generated
by homogeneous elements $f_1,\dots, f_r.$ Consider a graded presentation $A^{s}\stackrel{ \Phi}{\rightarrow} A^{r} \rightarrow F$ of $\lr{\ini_{(\omega,\epsilon)} {f_1},\dots, \ini_{(\omega,\epsilon)}{f_r}}$  and assume that $\Phi(A^{s})$ is generated in degree at most $d.$ 
Then $in_{(\omega,\epsilon)}(M)$ and $\lr{\ini_{(\omega,\epsilon)} {f_1},\dots, \ini_{(\omega,\epsilon)}{f_r}}$ are equal provided the two modules agree in all degrees less than or equal to $d.$
\end{corollary}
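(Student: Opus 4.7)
The plan is to verify the Buchberger criterion of Corollary~\ref{BU} directly. Fix homogeneous generators $\alpha_1,\dots,\alpha_s \in A^r$ of $\Phi(A^s)$ of degrees at most $d$; viewed in $\tilde A^r$ they are bihomogeneous with first-degrees at most $d$. Set $M_0 := \lr{\tilde f_1,\dots,\tilde f_r}$ and $q_j := \tfrac{1}{t}(\tilde f_1,\dots,\tilde f_r)(\alpha_j) \in \tilde F$; each $q_j$ is bihomogeneous of first-degree at most $d$, and $tq_j \in M_0 \subseteq \tilde M$. The $K[t]$-freeness of $\tilde F/\tilde M$ (Proposition~\ref{carrellata}(1)) then forces $q_j \in \tilde M$. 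By Corollary~\ref{BU} the desired equality follows once each $q_j \in M_0$, and I would in fact establish the uniform statement: every bihomogeneous $q \in \tilde M$ of first-degree at most $d$ belongs to $M_0$.

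I plan to prove this by induction on the second-degree $g$ of $q$. The induction is well-founded because, once the first-degree $e \leq d$ is fixed, bihomogeneous elements of $\tilde F$ of bi-degree $(e,g)$ are $K$-linear combinations of monomials $X^{\mathbf u} t^k e_j$ with $k = g - \epsilon_j - \omega \cdot \mathbf u \geq 0$, so $g$ is bounded below and $q$ is forced to be $0$ below this bound. For the inductive step, reduce $q$ modulo $t$ to get $\underline q \in \underline{\tilde M} = \ini_{(\omega,\epsilon)}(M)$ by Proposition~\ref{carrellata}(2); this element has degree $e \leq d$, so by hypothesis $\underline q \in \lr{\ini_{(\omega,\epsilon)}(f_1),\dots,\ini_{(\omega,\epsilon)}(f_r)}$. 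Lift this membership to a bihomogeneous $p \in M_0$ of bi-degree exactly $(e,g)$ with $\underline p = \underline q$; then $q - p = tq'$ for a bihomogeneous $q' \in \tilde F$ of bi-degree $(e,g-1)$, and one more appeal to $t$-torsion-freeness gives $q' \in \tilde M$. The induction hypothesis yields $q' \in M_0$ and hence $q = p + tq' \in M_0$.

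The main obstacle I anticipate is the lifting step --- producing $p \in M_0$ with bi-degree matching $(e,g)$, not merely the correct first-degree $e$. The key observation is that, because $q$ is bihomogeneous of bi-degree $(e,g)$, every monomial $X^{\mathbf u} e_j$ appearing in $\underline q$ satisfies $\omega \cdot \mathbf u + \epsilon_j = g$, so the monomials of $\underline q$ all share a common weight $g$. Hence in any expression $\underline q = \sum h_i \ini_{(\omega,\epsilon)}(f_i)$ with $h_i \in A$, only the weight-$(g - W_i)$ component of $h_i$ contributes, where $W_i$ denotes the weight of $\ini_{(\omega,\epsilon)}(f_i)$. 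After replacing each $h_i$ by this component we may assume $h_i$ is a homogeneous polynomial of degree $e - \deg f_i$ whose monomials all have the single weight $g - W_i$; such an $h_i$ admits an evident bihomogenization $\tilde h_i \in \tilde A$ of bi-degree $(e - \deg f_i, g - W_i)$, and then $p := \sum \tilde h_i \tilde f_i \in M_0$ has bi-degree $(e,g)$ with $\underline p = \underline q$. Once this bihomogeneous lift is in place, the induction runs as described and Corollary~\ref{BU} concludes the proof.
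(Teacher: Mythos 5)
Your proposal is correct, and it reaches the conclusion by a genuinely different route than the paper. You verify the hypothesis of Corollary~\ref{BU} directly: you prove, by induction on the $t$-degree, that every bihomogeneous element of $\tilde M$ whose first degree is at most $d$ already lies in $M_0=\lr{\tilde f_1,\dots,\tilde f_r}$, the key step being the explicit weight-homogeneous lift $p=\sum h_i\tilde f_i$ of $\underline q\in\lr{\ini_{(\omega,\epsilon)}f_1,\dots,\ini_{(\omega,\epsilon)}f_r}$, followed by one application of $K[t]$-torsion-freeness of $\tilde F/\tilde M$ to descend to second degree $g-1$. The paper instead runs one step of Algorithm~\ref{alg} and shows $Q\subseteq M_0$ by a counting argument: since $Q$ has first degrees bounded by $d$, it compares Hilbert functions of $(M_0)_{t=0}\subseteq(M_0+Q)_{t=0}\subseteq\tilde M_{t=0}$ in degrees $\leq d$ and then uses the structure theorem for graded $K[t]$-modules on the strands $(\tilde F/M_0)_{(a,\bullet)}$, $a\leq d$, to rule out torsion and force equality of the three modules in those strands. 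Your argument is more constructive and element-wise, avoids the Hilbert-function/PID-structure step entirely, and in fact yields the slightly stronger statement that $M_0$ and $\tilde M$ agree in all bidegrees with first component $\leq d$; the paper's argument is shorter given the machinery it quotes, but less explicit. One small inaccuracy to fix: the generators $\alpha_j$ of $\Phi(A^s)$, and hence the elements $q_j=\frac1t(\tilde f_1,\dots,\tilde f_r)(\alpha_j)$, are homogeneous for the first (standard) grading but need not be bihomogeneous, since the entries of $\Phi$ need not be weight-homogeneous. This is harmless: $\tilde M$ is a bigraded submodule, so once $q_j\in\tilde M$ each bihomogeneous component of $q_j$ lies in $\tilde M$ and has first degree equal to $\deg\alpha_j\leq d$; applying your uniform claim componentwise gives $q_j\in M_0$, and Corollary~\ref{BU} concludes as you state.
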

\begin{proof} We follow the same notation of Algorithm \ref{alg} and let $M_0\subseteq \tilde F$ be $\lr{\tilde f_1,\dots, \tilde f_r}.$ By assumption,  the generating degree of $(\tilde{f_1},\dots,\tilde{f_r})\circ \Phi(\tilde A^s)=t Q$ has first component bounded by $d.$ We have $(M_0)_{t=0}\subseteq (M_0+Q)_{t=0} \subseteq \tilde M_{t=0}.$ Since the Hilbert functions of the first and of the last module agree in all degrees less than or equal to $d,$ we deduce that the there modules agree in all such degrees. On the other hand $\tilde F/ \tilde M$ is $K[t]$-free, so from the inclusion $M_0\subseteq (M_0+Q)\subseteq \tilde M$ we deduce can 
that $M_0$, $M_0+Q$  and $\tilde M$ agree in all multidegrees whose first component is bounded by $d,$ provided we can show that in these degrees $\tilde F/ (M_0)$ and  $\tilde F/(M_0+Q)$ are $K[t]$-free (i.e. have no torsion 
 as $K[t]$- modules). Fix a degree $a\leq d$ and consider $C=(\tilde F/ (M_0))_{(a,\bullet)}$ consisting of all the bigraded components whose degree has first entry equal to $a.$ Notice that $C$ is a f.g. graded $K[t]$-module, hence we can apply the structure theorem for modules over PID (in the graded setting) and deduce that the torsion of $C$, if any, if of the form $\oplus_i  K[t]/(t^{a_i}).$ On the other hand if this torsion is not zero $(\tilde F/M_0) \otimes \tilde A/ (t)= F/\lr{\ini_{(\omega,\epsilon)} {f_1},\dots, \ini_{(\omega,\epsilon)}{f_r}}$   and  $(\tilde F/M_0) \otimes \tilde A/ (t-1) = F/ M$  would have Hilbert functions with different values at $a,$ which is impossible since those Hilbert functions agree, at $a$,  with the one of  $F/\ini_{(\omega, \epsilon)}(M).$
An analogous argument works for $\tilde F/(M_0+Q).$ Hence $M_0=M_0+Q,$ and in particular, since we now know that $Q\subseteq M_0$, the algorithm terminates at  $M_0.$
\end{proof}

\section{Componentwise regularity}\label{sec-creg}
In this section we prove some properties regarding the $m$-componentwise regularity and the componentwise linearity.

Recall that, by \cite{AE,AP}, a finitely generated standard graded $K$-algebra $R$ is Koszul if and only if every f. g. graded $R$-module $M$ has finite Castelnuovo-Mumford regularity. 

Let  $R$ be Koszul and let $M$ be a f. g.  graded $R$-module generated in degree less then or equal to $d.$ It is not hard to see that $\reg(M)\leq d+\creg(M).$
 Furthermore, from the definition and from the fact that over a Koszul algebra $\reg(\mathbf mM)\leq \reg(M)+1$, one has that $\creg(M)= \max\{\reg (M_{\lr{d}})-d : \beta_{0d}(M)\not =0\}.$ Hence, over a Koszul algebra, every f. g. graded module has finite componentwise regularity. 
 On the other hand, given a f. g. standard graded $K$-algebra $R$ such that every f. g. graded $R$-module has finite componentwise regularity, one has $\reg(R/ \mathbf m)= \creg(R/ \mathbf m)<\infty$. This is enough, by \cite{AP}, to conclude that $R$-is Koszul. We have thus proven that $R$ is Koszul if and only if every f. g. graded $R$-module has finite componentwise regularity.

In general, one cannot expect to obtain good upper bounds for the componentwise regularity in terms of the Castlnuovo-Mumford regularity. For instance by considering an ideal of $I\subseteq A=K[X_1,\dots,X_n]$ generated in a single degree $d$ and such that $d  \ll \reg(I),$ we see that $\reg( I+ \mathbf m^{d+1})=d+1  \ll \creg(I+\mathbf m^{d+1})=\reg(I)-d.$ Hence, even when working with homogeneous ideals  in a polynomial ring, a general bound for the componentwise regularity in terms of the regularity is as bad as the bound for the regularity in terms of the generating degree, i.e. double exponential (see \cite{CS2}).

Given a finitely generated graded $R$-module $M$ we denote by $\Omega_i^R(M)$ the $i$th syzygy module of $M$, that is, the kernel of the map $\partial_i:F_i\rightarrow F_{i-1}$, where $\{F_{\bullet} ,\partial_{\bullet}\}$ is a minimal graded free resolution of $M$ over $R$ (here $F_{i-1}=M$ and $\partial_0$ is the presentation $F_0\rightarrow M$). Whenever $i<0$ we set $\Omega_i^R(M)$ to be equal to $M.$
 The maps $\partial_i$ are represented by matrices whose entries are homogeneous elements of $R$. By putting at $0$ all the entries of degree $>1$, we get a new complex, $\lin(F_{\bullet})$, and we can define the {\it linear defect of $M$ as}:
\[\ld_R(M)=\sup\{i:H_i(\lin(F_{\bullet}))\neq 0\}.\]
In his PhD thesis \cite[Theorem 3.2.8]{R}, R\"omer proved that, provided $R$ is Koszul, the componentwise linearity of $M$ is characterized by the fact that $\ld_R(M)=0$. This implies at once the following, a priori not clear, fact:

\begin{lemma}\label{RO}
Let $M$ be a f. g. graded module over a Koszul algebra $R$. If $\Omega_i^R(M)$ is componentwise linear, then $\Omega_j^R(M)$ is componentwise linear for all $j\geq i$.
\end{lemma}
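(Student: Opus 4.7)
The plan is to apply R\"omer's characterization \cite[Theorem 3.2.8]{R}, recalled in the paragraph preceding the statement: over the Koszul algebra $R$, a finitely generated graded module $N$ is componentwise linear if and only if $\ld_R(N)=0$. A straightforward induction then reduces the assertion to its one-step version: \emph{if $N$ is componentwise linear, so is $\Omega_1^R(N)$}; here one uses the standard identification $\Omega_{j+1}^R(M)=\Omega_1^R(\Omega_j^R(M))$, which is immediate from the minimality of the resolution.

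Fix then a minimal graded free resolution $F_\bullet\colon \cdots \to F_2\stackrel{\partial_2}{\to} F_1\stackrel{\partial_1}{\to} F_0\to N \to 0$. A minimal graded free resolution of $\Omega_1^R(N)$ is obtained by shifting $F_\bullet$: setting $F'_j:=F_{j+1}$ and $\partial'_j:=\partial_{j+1}$ for $j\geq 1$, with $\partial'_0\colon F_1\twoheadrightarrow \Omega_1^R(N)$ the canonical surjection, yields a minimal graded free resolution of $\Omega_1^R(N)$, thanks to the exactness and minimality of $F_\bullet$.

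The operation $\lin(-)$ consists in zeroing out, inside each differential, the entries of degree strictly greater than $1$; it is thus applied coordinatewise to each differential and is insensitive to the homological index. Consequently, in every homological position $j\geq 1$ the differential of $\lin(F'_\bullet)$ is $\lin(\partial_{j+1})$, and therefore $H_i(\lin F'_\bullet)=H_{i+1}(\lin F_\bullet)$ for every $i\geq 1$. By hypothesis $\ld_R(N)=0$, so the right-hand side vanishes for all such $i$. This yields $\ld_R(\Omega_1^R(N))=0$, and R\"omer's theorem in the converse direction concludes that $\Omega_1^R(N)$ is componentwise linear, completing the inductive step.

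The argument is quite short once R\"omer's theorem is invoked; the only point that requires a moment of attention is the identification of $\lin(F'_\bullet)$ with the shift of $\lin(F_\bullet)$, but this follows immediately from the local-in-$j$ nature of the linearization procedure.
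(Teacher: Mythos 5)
Your argument is correct and is essentially the paper's own route: the paper deduces the lemma ``at once'' from R\"omer's characterization via the linear defect, and your proof merely makes explicit the underlying observation that the shifted minimal resolution of the syzygy module has linear part equal to the shift of $\lin(F_\bullet)$, so $\ld_R$ of a syzygy vanishes whenever $\ld_R(N)=0$. The only (harmless) discrepancy is notational: with the paper's convention $\Omega_0^R(M)=\ker(F_0\to M)$, so your one-step reduction should be written as $\Omega_{j+1}^R(M)\cong\Omega_0^R(\Omega_j^R(M))$ rather than via $\Omega_1^R$.
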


For further properties of the linear defect of modules see \cite{IR}.
We adopt the same notation as Section 2 and Section 4.

\begin{theorem}[Crystallization]\label{Cr}  Let $M\subseteq F$ be a f. g. graded $A$-module generated in degrees less than or equal to $a$. If $\ini_{(\omega,\epsilon)}(M)$ has no minimal generators in degrees $a+1,\ldots,a+1+\creg(\ini_{(\omega,\epsilon)}(M))$, then $\ini_{(\omega,\epsilon)}(M)$ is generated in degree less than or equal to $a$.
\end{theorem}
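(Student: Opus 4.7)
The plan is to reduce Theorem~\ref{Cr} to a single application of the Gr\"obner criterion in Corollary~\ref{SH}, for a carefully chosen generating set of $M$. Set $r = \creg(\ini_{(\omega,\epsilon)}(M))$. I choose homogeneous generators $f_1, \dots, f_s$ of $M$ in degrees $\leq a$ so that, for every $c \leq a$, the initial forms of those $f_i$'s of degree $c$ form a $K$-basis of $\ini_{(\omega,\epsilon)}(M)_c$: for each such $c$ I take a basis of $M_c$ compatible with the weight filtration, which is possible because the associated graded of that filtration is $\ini_{(\omega,\epsilon)}(M)_c$ and the two spaces have the same dimension by Proposition~\ref{carrellata}. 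Set $N' := \langle \ini_{(\omega,\epsilon)}(f_1), \dots, \ini_{(\omega,\epsilon)}(f_s) \rangle$; by construction $N'$ is generated in degrees $\leq a$, and $(N')_c = \ini_{(\omega,\epsilon)}(M)_c$ for every $c \leq a$.

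The core technical step is the bound $\reg(N') \leq a + r$. For $c \leq a$ the submodule $(N')_{\langle c \rangle}$ of $F$ equals $\ini_{(\omega,\epsilon)}(M)_{\langle c \rangle}$, whose regularity is $\leq c + r$ by the very definition of $\creg$. For $c > a$, since $N'$ is generated in degrees $\leq a$, one checks that $(N')_c = \mathbf{m}^{c-a}\, \ini_{(\omega,\epsilon)}(M)_a$, so $(N')_{\langle c \rangle}$ coincides with the truncation $\bigl(\ini_{(\omega,\epsilon)}(M)_{\langle a \rangle}\bigr)_{\geq c}$; the standard identity $\reg(L_{\geq c}) \leq \max(\reg L, c)$ for finitely generated graded $A$-modules then yields $\reg((N')_{\langle c \rangle}) \leq \max(a + r, c) \leq c + r$. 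Hence $\creg(N') \leq r$, and since $N'$ is generated in degrees $\leq a$, the inequality $\reg(N') \leq a + \creg(N')$ recorded at the start of Section~\ref{sec-creg} gives $\reg(N') \leq a + r$. In particular, a minimal graded presentation $A^t \stackrel{\Phi}{\to} A^s \to N' \to 0$ has $\Phi(A^t)$ generated in degrees $\leq a + r + 1$.

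To finish, I invoke the hypothesis that $\ini_{(\omega,\epsilon)}(M)$ has no minimal generators in the window $a+1,\dots,a+1+r$. A short induction on $j = 1,\dots,r+1$ shows $\ini_{(\omega,\epsilon)}(M)_{a+j} \subseteq (N')_{a+j}$: the absence of new generators forces every element of $\ini_{(\omega,\epsilon)}(M)_{a+j}$ to lie in $\mathbf{m} \cdot \ini_{(\omega,\epsilon)}(M)$, and the inductive hypothesis together with $(N')_c = \ini_{(\omega,\epsilon)}(M)_c$ for $c \leq a$ pushes it into $(N')_{a+j}$. Combined with $N' \subseteq \ini_{(\omega,\epsilon)}(M)$, this gives equality in all degrees $\leq a + 1 + r$. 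Corollary~\ref{SH}, applied to $f_1,\dots,f_s$ with $d = a + r + 1$, then forces $\ini_{(\omega,\epsilon)}(M) = N'$, which is generated in degrees $\leq a$.

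The hard part, I expect, is the regularity bound on $N'$: one has to spot the identification $(N')_{\langle c \rangle} = \bigl(\ini_{(\omega,\epsilon)}(M)_{\langle a \rangle}\bigr)_{\geq c}$ for $c > a$, which is what turns the componentwise data (a hypothesis on each $\ini_{(\omega,\epsilon)}(M)_{\langle c \rangle}$ separately) into a genuine bound on the full regularity of $N'$, and thereby on the degrees of its first syzygies. Avoiding this trick would force one to manipulate exact sequences involving intersections of the various componentwise pieces of $\ini_{(\omega,\epsilon)}(M)$, which seems far harder to control.
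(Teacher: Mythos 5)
Your argument is correct and is essentially the paper's proof: both apply Corollary \ref{SH} to the submodule generated by initial forms of a generating set of $M$, using $\creg(\ini_{(\omega,\epsilon)}(M))$ to bound the generating degree of its first syzygies and the no-new-generators hypothesis (via the short induction you spell out) to verify agreement in all degrees up to that bound. The only real difference is bookkeeping: the paper first replaces $M$ by $M_{\lr a}$, so the candidate module becomes $(\ini_{(\omega,\epsilon)}(M))_{\lr a}$ and the bound $\reg\leq a+\creg(\ini_{(\omega,\epsilon)}(M))$ is immediate from the definition, whereas you keep generators in every degree $\leq a$ and recover the same bound through the truncation identity; your filtration-adapted choice of generators (so that the initial forms span each component) also makes explicit a point the paper leaves implicit.
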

\begin{proof} 
Since the initial modules of $M$ and $M_{\lr{a}}$ agree in all the graded components greater than or equal to $a,$ we can assume without loss of generality that $M=M_{\lr{a}}.$  Let $f_1,\dots,f_r$ be the homogeneous minimal generators of $M.$ By definition $\reg(\lr{\ini_{\omega,\epsilon)}f_1,\dots,   \ini_{(\omega,\epsilon)}f_r})\leq a+ \creg(\ini_{(\omega,\epsilon)}(M)).$ By Corollary \ref{SH} we deduce that 
$\ini_{(\omega,\epsilon)}(M)=\lr{\ini_{(\omega,\epsilon)}f_1,\dots, \ini_{(\omega,\epsilon)}f_r}.$  
\end{proof}

\begin{remark} It is useful to notice that the above result is still valid provided the notion of componentwise regularity is replaced by a weaker one defined in terms of the syzygy module. Precisely we can replace $\creg(\ini_{(\omega,\epsilon)}(M))$ by an integer $r$ such that $\beta_{1,d+1}(\ini_{(\omega,\epsilon)}(M))_{\lr{a}}=0$ for all $d>a+r.$ 
\end{remark}

The following result for graded modules, which is quite a standard fact when working with homogeneous ideals and term orders, is a direct consequence of Theorem \ref{Cr}.

\begin{theorem}\label{SameB} Let $M\subseteq F$ a f. g. graded $A$-module and assume that  $\ini_{(\omega,\epsilon)}(M)$ is componentwise linear. If  $\beta_0(M)= \beta_0( in_{(\omega,\epsilon)}(M))$ then $M$ is componentwise linear as well. 
\end{theorem}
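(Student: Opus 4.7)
The plan is to show that for each $a$ the truncation $N := M_{\langle a \rangle}$ has an $a$-linear resolution, by proving $\ini_{(\omega,\epsilon)}(N)=\ini_{(\omega,\epsilon)}(M)_{\langle a \rangle}$, after which $\reg(N)\le \reg(\ini_{(\omega,\epsilon)}(N))=a$ follows from Proposition \ref{carrellata}(3) and the componentwise linearity of $\ini_{(\omega,\epsilon)}(M)$.

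The hypothesis $\beta_0(M)=\beta_0(\ini_{(\omega,\epsilon)}(M))$, combined with the degreewise inequality $\beta_{0,j}(M)\le\beta_{0,j}(\ini_{(\omega,\epsilon)}(M))$ coming from Proposition \ref{carrellata}(3), forces $\beta_{0,j}(M)=\beta_{0,j}(\ini_{(\omega,\epsilon)}(M))$ for every $j$. Since $\beta_{0,a+1}(L)=\dim_K L_{a+1}-\dim_K \mathbf m L_a$ for every f.g.\ graded module $L$, and since $M$ and $\ini_{(\omega,\epsilon)}(M)$ share the same Hilbert function, the first step I would take is to extract the key dimensional identity
\[
\dim_K \mathbf m M_a=\dim_K \mathbf m \ini_{(\omega,\epsilon)}(M)_a\qquad \text{for every } a.
\]

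Fix $a$ and put $N=M_{\langle a \rangle}$, which is generated in degree $a$. From $\ini_{(\omega,\epsilon)}(N)\subseteq \ini_{(\omega,\epsilon)}(M)$ and $\dim_K \ini_{(\omega,\epsilon)}(N)_a=\dim_K N_a=\dim_K M_a=\dim_K \ini_{(\omega,\epsilon)}(M)_a$ I would deduce $\ini_{(\omega,\epsilon)}(N)_a=\ini_{(\omega,\epsilon)}(M)_a$, hence $\ini_{(\omega,\epsilon)}(N)_{\langle a \rangle}=\ini_{(\omega,\epsilon)}(M)_{\langle a \rangle}$, which has a linear resolution. The identity above then gives
\[
\dim_K \ini_{(\omega,\epsilon)}(N)_{a+1}=\dim_K N_{a+1}=\dim_K \mathbf m M_a=\dim_K \mathbf m \ini_{(\omega,\epsilon)}(M)_a=\dim_K\bigl(\ini_{(\omega,\epsilon)}(M)_{\langle a \rangle}\bigr)_{a+1},
\]
and the inclusion $\bigl(\ini_{(\omega,\epsilon)}(M)_{\langle a \rangle}\bigr)_{a+1}\subseteq \ini_{(\omega,\epsilon)}(N)_{a+1}$ becomes an equality; hence $\ini_{(\omega,\epsilon)}(N)_{a+1}=\mathbf m\, \ini_{(\omega,\epsilon)}(N)_a$ and $\ini_{(\omega,\epsilon)}(N)$ has no minimal generator in degree $a+1$.

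Finally I would apply the Remark following Theorem \ref{Cr} to $N$: since $\ini_{(\omega,\epsilon)}(N)_{\langle a \rangle}$ has a linear resolution, $\beta_{1,d+1}(\ini_{(\omega,\epsilon)}(N)_{\langle a \rangle})=0$ for every $d>a$, so the integer $r$ in the Remark can be taken to be $0$, and the vanishing of minimal generators of $\ini_{(\omega,\epsilon)}(N)$ in the single degree $a+1$ is already enough to conclude that $\ini_{(\omega,\epsilon)}(N)$ is generated in degrees $\le a$. Together with $\ini_{(\omega,\epsilon)}(N)_a=\ini_{(\omega,\epsilon)}(M)_a$ this yields $\ini_{(\omega,\epsilon)}(N)=\ini_{(\omega,\epsilon)}(M)_{\langle a \rangle}$, which closes the argument since $a$ is arbitrary. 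The main obstacle is exactly the extraction of the identity $\dim_K \mathbf m M_a=\dim_K \mathbf m \ini_{(\omega,\epsilon)}(M)_a$ from the equality of the $\beta_0$'s; once this is in hand, the rest is a Hilbert function comparison plus one invocation of the Remark version of Theorem \ref{Cr}.
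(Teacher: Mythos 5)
Your argument is correct and follows essentially the same path as the paper's proof: degreewise equality of the $0$th Betti numbers plus equality of Hilbert functions gives $\dim_K(\mathbf m M)_{a+1}=\dim_K(\mathbf m\,\ini_{(\omega,\epsilon)}(M))_{a+1}$, whence $\ini_{(\omega,\epsilon)}(M_{\langle a\rangle})$ has no minimal generator in degree $a+1$, and crystallization then identifies it with $(\ini_{(\omega,\epsilon)}(M))_{\langle a\rangle}$, so Proposition \ref{carrellata}(3) bounds $\reg(M_{\langle a\rangle})$ by $a$. The only (welcome) difference is that you invoke the Remark after Theorem \ref{Cr} with $r=0$, justified by the linear resolution of $(\ini_{(\omega,\epsilon)}(M))_{\langle a\rangle}$, which makes explicit why vanishing in the single degree $a+1$ suffices -- a point the paper's appeal to Theorem \ref{Cr} leaves implicit.
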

\begin{proof} 
The hypothesis $\beta_0(M)= \beta_0(\ini_{(\omega,\epsilon)}(M))$, together with Proposition \ref{carrellata} (3), imply for all degree $d$ that $\beta_{0d}(M)= 
\beta_{0d}(in_{(\omega,\epsilon)}(M)).$ 
Since the Hilbert functions of $M$ and $\ini_{(\omega,\epsilon)}(M)$ are the same, the equality between $0$-graded Betti numbers forces $\mathbf m M$ and $\mathbf m  \ini_{(\omega,\epsilon)}(M)$ to have the same Hilbert function.
From the equality between the $0$-graded Betti numbers and the equality of Hilbert functions one deduces that, for every degree $a$, the initial module of $M_{\lr a}$ has no generator in degree $a+1$ (otherwise the extra miminal generators would contribute to the minimal generators of the initial module of $M$). By Theorem \ref{Cr} the initial module of $M_{\lr a}$ (if not zero) is generated in degree $a,$ and thus agrees with $(\ini_{(\omega, \epsilon)}(M))_{\lr a}.$ By Proposition \ref{carrellata} (3), $\reg M_{\lr a} \leq \reg  \ini_{(\omega, \epsilon)}(M _{\lr a}),$ and by assumption the latter is bounded by $a.$
\end{proof}

\begin{remark}
Under certain assumptions, the implication of Theorem \ref{SameB} can be reversed. For instance when the characteristic of $K$ is zero and $\ini_{(\omega,\epsilon)}(M)$ is replaced by the generic initial module with respect to a reverse lexicographic order on $F$ (see [NR] for the ideal theoretic case).
\end{remark}

By using the fact that the graded Betti numbers of $M$ can be obtained from those of  $\ini_{(\omega,\epsilon)}(M)$  by consecutive cancellations (see \cite{P}), we see that if  $\beta_1(M)=\beta_1(\ini_{(\omega,\epsilon)}(M))$ then $\beta_0(M)$ and $\beta_0(\ini_{(\omega,\epsilon)}(M))$ are equal as well.  Hence, from Theorem \ref{SameB}, we derive the following:

\begin{corollary} \label{SameB2} Let $M\subseteq F$ a f. g. graded $A$-module and assume that  $\ini_{(\omega,\epsilon)}(M)$ is componentwise linear. If  $\beta_1(M)= \beta_1( in_{(\omega,\epsilon)}(M))$ then $M$ is componentwise linear as well.
\end{corollary}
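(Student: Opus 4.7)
The plan is to derive this as a direct formal consequence of Theorem \ref{SameB}, using Peeva's consecutive cancellation principle already invoked in the paragraph preceding the statement. First I would recall that, by Proposition \ref{carrellata}(3), one has $\beta_{ij}(M)\leq \beta_{ij}(\ini_{(\omega,\epsilon)}(M))$ for every pair $(i,j)$; Peeva's theorem \cite{P} then asserts that the graded Betti table of $M$ is obtained from that of $\ini_{(\omega,\epsilon)}(M)$ by a finite sequence of \emph{consecutive cancellations}, each of which simultaneously decreases some pair $\beta_{i,j}$, $\beta_{i+1,j}$ by one.

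Next I would examine what the hypothesis $\beta_1(M)=\beta_1(\ini_{(\omega,\epsilon)}(M))$ on total Betti numbers forces on this cancellation sequence. Any cancellation in which homological position $1$ appears (i.e.\ either a cancellation of a pair in positions $(0,1)$ or one in positions $(1,2)$) strictly decreases $\beta_1$; therefore no such cancellation can occur. In particular, no cancellation between positions $(0,1)$ takes place, which forces $\beta_0(M)=\beta_0(\ini_{(\omega,\epsilon)}(M))$.

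At this point the hypothesis of Theorem \ref{SameB} is satisfied, and a direct application of that theorem yields that $M$ is componentwise linear, which is the desired conclusion.

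The ``main obstacle'' is really only bookkeeping: one needs to keep in mind that $\beta_1$ denotes the total Betti number, so that the consecutive cancellation argument can rule out \emph{all} cancellations touching homological position $1$ (and in particular the $(0,1)$-ones). Once this is observed, the corollary is purely formal given Theorem \ref{SameB}, with no additional input from the theory of initial modules or from Section 3 required.
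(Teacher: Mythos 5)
Your proposal is correct and follows exactly the paper's own route: the paragraph preceding the corollary uses Peeva's consecutive cancellation result to deduce $\beta_0(M)=\beta_0(\ini_{(\omega,\epsilon)}(M))$ from $\beta_1(M)=\beta_1(\ini_{(\omega,\epsilon)}(M))$, and then applies Theorem \ref{SameB}. Your bookkeeping of which cancellations touch homological position $1$ just spells out the same argument in slightly more detail.
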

\begin{theorem}\label{leila} Let $M\subseteq F$ be a f. g. graded $A$-module and assume that  $\ini_{(\omega,\epsilon)}(M)$ is componentwise linear. If $\beta_i(M)=\beta_i(\ini_{(\omega,\epsilon)}(M))$ for some $i\geq 0,$ then $\Omega_{j}^R(M)$ is componentwise linear for all $j\geq i-2.$ 
\end{theorem}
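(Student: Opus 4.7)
The plan is to reduce the statement to a direct application of Corollary \ref{SameB2} to the syzygy module $N := \Omega_{i-2}^A(M)$. Once $N$ is shown to be componentwise linear, Lemma \ref{RO} (applicable since $A$ is Koszul) will give componentwise linearity of $\Omega_j^A(M)$ for every $j \geq i-2$. The low-index cases $i \in \{0,1\}$ need no syzygies at all: Theorem \ref{SameB} (when $i=0$) and Corollary \ref{SameB2} (when $i=1$) already force $M$ itself to be componentwise linear, and since $\Omega_j^A(M)=M$ for $j<0$, Lemma \ref{RO} then yields the full conclusion. So from now on assume $i \geq 2$.

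For such $i$, view $N = \Omega_{i-2}^A(M)$ as a submodule of the free $A$-module $F_{i-1}$ in the sense of Remark \ref{INSIZ}, that is, the $(i-1)$-th term of the minimal $\tilde A$-free resolution $\tilde{\mathbb F}$ of $\tilde F/\tilde M$ specialized at $t=1$. Remark \ref{INSIZ} applied with its index equal to $i-1$, and using the weight $\epsilon(i-1)$ inherited from the bigraded basis of $\tilde F_{i-1}$, yields the key identity
\[
\ini_{(\omega,\epsilon(i-1))}(N) \;=\; \Omega_{i-2}^A\bigl(\ini_{(\omega,\epsilon)}(M)\bigr),
\]
which we abbreviate by $\ini(N)$.

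The two hypotheses of Corollary \ref{SameB2} for $N$ (with respect to the weight $(\omega,\epsilon(i-1))$) are then verified as follows. Since $\ini_{(\omega,\epsilon)}(M)$ is componentwise linear and $A$ is Koszul, Lemma \ref{RO} forces every higher syzygy to be componentwise linear, so $\ini(N)$ is componentwise linear. Reading off ranks from the minimal free resolutions of $M$ and $\ini_{(\omega,\epsilon)}(M)$ gives $\beta_1(N) = \beta_i(M)$ and $\beta_1(\ini(N)) = \beta_i(\ini_{(\omega,\epsilon)}(M))$, and these agree by the main hypothesis. Corollary \ref{SameB2} then delivers that $N$ is componentwise linear, and one final application of Lemma \ref{RO} gives the theorem.

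The main obstacle is bookkeeping: tracking which syzygy of which of the three resolutions $\tilde{\mathbb F}$, $\mathbb F$, $\mathbb G$ sits in which free module, and pairing it with the correct weight shift $\epsilon(i-1)$ so that Remark \ref{INSIZ} applies verbatim. Once this setup is correct, the proof becomes a clean instance of Corollary \ref{SameB2} applied to a single syzygy, made possible by the identity $\ini(\text{syzygy of }M) = \text{syzygy of }\ini(M)$ distilled from Remark \ref{INSIZ}.
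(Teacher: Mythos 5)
Your overall route is the paper's: reduce to $i\geq 2$, use Remark \ref{INSIZ} to compare a syzygy of $M$ with the corresponding syzygy of $\ini_{(\omega,\epsilon)}(M)$, apply Corollary \ref{SameB2} to that syzygy, and finish with Lemma \ref{RO}. But there is a genuine gap at your ``key identity''. You identify $N=\Omega_{i-2}^A(M)$, the syzygy cut out of the \emph{minimal} resolution of $M$, with the image of the differential of $\mathbb F=\tilde{\mathbb F}\otimes\tilde A/(t-1)$, which is the module Remark \ref{INSIZ} actually speaks about. The resolution $\mathbb F$ is in general \emph{not} minimal: it has the same ranks as the minimal resolution $\mathbb G$ of $F/\ini_{(\omega,\epsilon)}(M)$, so it is minimal only when all Betti numbers of $M$ and $\ini_{(\omega,\epsilon)}(M)$ agree, which is essentially the situation the theorem is probing. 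Consequently the image $\tilde B_{t=1}$ of the differential of $\mathbb F$ is only isomorphic to $\Omega_{i-2}^A(M)$ up to a graded free direct summand, and the identity $\ini_{(\omega,\epsilon(i-2))}(N)=\Omega_{i-2}^A(\ini_{(\omega,\epsilon)}(M))$ is not available for $N$ itself: the weight $\epsilon(i-2)$ lives on the basis of the term of $\mathbb F$, not on the smaller free module containing the minimal syzygy, and Remark \ref{INSIZ} gives exactly $\ini(\tilde B_{t=1})=\tilde B_{t=0}$ and nothing more.

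The repair is precisely the step you skipped, and it is how the paper concludes. Apply Corollary \ref{SameB2} to $\tilde B_{t=1}$, not to $N$: its initial module $\tilde B_{t=0}$ is the corresponding syzygy of $\ini_{(\omega,\epsilon)}(M)$ taken in the resolution $\mathbb G$, which \emph{is} minimal, hence $\tilde B_{t=0}$ is componentwise linear by Lemma \ref{RO}; and the hypothesis $\beta_i(M)=\beta_i(\ini_{(\omega,\epsilon)}(M))$ forces $\beta_1(\tilde B_{t=1})=\beta_1(\tilde B_{t=0})$ through the chain $\beta_i(M)=\beta_1(\Omega_{i-2}^A(M))\leq\beta_1(\tilde B_{t=1})\leq\beta_1(\tilde B_{t=0})=\beta_i(\ini_{(\omega,\epsilon)}(M))$, where the middle inequality comes from Proposition \ref{carrellata}(3) applied to $\tilde B_{t=1}$ and its initial module. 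This yields componentwise linearity of $\tilde B_{t=1}$, and then Schanuel's lemma, together with the observation that adding or deleting a graded free summand does not affect componentwise linearity, transfers the conclusion to $\Omega_{i-2}^A(M)$. (Your indexing $F_{i-1}$, $\epsilon(i-1)$ versus the paper's $F_{i-2}$, $\epsilon(i-2)$ is only the bookkeeping you yourself flagged; the substantive omission is the minimality issue and the Schanuel step.)
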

\begin{proof}  Because of Theorem \ref{SameB}, Corollary \ref{SameB2} and Lemma \ref{RO} we can assume, without loss of generality, that $i\geq 2.$
Notice that for $j$ and $b$ non-negative $\Omega_{j+b+1}^R(M) \cong \Omega_{j}^R(\Omega_{b}^R(M))$, hence by Lemma \ref{RO} it is enough to prove that $\Omega_{i-2}^R(M)$ is componentwise linear. We adopt a notation analogous to the one of  Remark \ref{INSIZ} and consider the (not necessarily minimal) graded free resolution 
$\mathbb F:=\tilde{\mathbb F} \otimes \tilde A/(t-1)$ of  $F/M,$ and  the minimal graded free resolution 
$\mathbb G:=\tilde{\mathbb F} \otimes \tilde A/(t)$ of  $F/\ini_{(\omega,\epsilon)}(M).$ 
We denote with $\tilde B$, $\tilde B _{t=1}$ and $\tilde B_{t=0}$ the images in $\tilde F_{i-2},$  $F_{i-2},$ and $G_{i-2}$ of the respective differential maps. By Remark \ref{INSIZ} we know that $\ini_{\omega,\epsilon(i-2)}(\tilde B_{t=1})= \tilde B_{t=0}.$ By Lemma \ref{RO}, since  $\ini_{(\omega,\epsilon)}(M)$ is componentwise linear, we deduce that its $(i-2)$th syzygy module, i.e. $\tilde B_{t=0}$, is componentwise linear as well. 
We have the following inequalities between number of generators  $\beta_i(M)= \beta_1(\Omega_{i-2}^R(M))\leq  \beta_1(\tilde B_{t=1}) \leq \beta_1(\tilde B_{t=0}) =
\beta_i(\ini_{(\omega,\epsilon)}(M))$, thus by Corollary \ref{SameB2},
$\tilde B_{t=1}$ is componentwise linear.
As a consequence of Schanuel's lemma (see \cite[Thm. 20.2]{E}), since  $\mathbb F$
may not be minimal,  $\tilde B_{t=1}$ and  $\Omega_{i-2}^R(M)$ are isomorphic up to a graded free summand, hence 
$\Omega_{i-2}^R(M)$ is componentwise linear as well.
\end{proof}
\begin{remark} When $M=I\subseteq A$ is a homogeneous ideal, under the same assumption as the above theorem, it was proven in \cite{CHH} that $\beta_j(I)=\beta_j(\ini(I))$ for all $j\geq i.$ It is possible to show that the same rigidity is also true when working with graded modules; unfortunately the technical argument required goes behind the scope of the present paper. 
\end{remark}



\end{document}